\documentclass[a4paper,14pt]{article}

\usepackage{mathtools, nicematrix}
\usepackage[left=20mm, top=20mm, right=20mm, bottom=20mm, nohead]{geometry}
\linespread{1.5}
\usepackage[T1,T2A]{fontenc}
\usepackage[utf8]{inputenc}
\usepackage[english]{babel}
\usepackage{amsmath}
\usepackage{amsfonts,amssymb}
\usepackage{amsthm}
\usepackage[electronic]{ifsym}
\usepackage[all]{xy}
\usepackage{authblk}
\usepackage{graphicx}%
\usepackage{multirow}%
\usepackage{amsmath,amssymb,amsfonts}%
\usepackage{amsthm}%
\usepackage{mathrsfs}%
\usepackage[title]{appendix}%
\usepackage{xcolor}%
\usepackage{textcomp}%
\usepackage{manyfoot}%
\usepackage{booktabs}%
\usepackage{algorithm}%
\usepackage{algorithmicx}%
\usepackage{algpseudocode}%
\usepackage{listings}%

\usepackage{hyperref}
\usepackage[nameinlink, capitalize]{cleveref}

\setcounter{section}{-1}

\theoremstyle{definition}
\newtheorem{definition}{Definition}[section]

\theoremstyle{theorem}
\newtheorem{theorem}{Theorem}[section]

\theoremstyle{lemma}
\newtheorem{lemma}{Lemma}[section]

\theoremstyle{proposition}
\newtheorem{proposition}{Proposition}[section]

\theoremstyle{remark}

\theoremstyle{corollary}

\theoremstyle{problem}

\theoremstyle{hypothesis}

\newcommand{\N}{\mathbb{N}}

\begin{document}
\title{Axiomatization of B{\"u}chi arithmetic}

\author{Konstantin Kovalyov\footnote{This work was supported by Theoretical Physics and Mathematics Advancement Foundation “BASIS” as a part of project №22-7-2-32-1.}}

\maketitle

\begin{abstract}
    In this paper we introduce an axiomatization of B{\"u}chi arithmetic, i.e., of the elementary theory of natural numbers in the language with addition and function $V_p(a) = p^k$ such that $p^k | a$ and $p^{k + 1} \nmid a$.
\end{abstract}

\section{Introduction}

We consider B{\"u}chi arithmetic $\mathsf{BA}_p = Th(\mathbb N, S, +, 0, V_p)$, where $S(n) = n + 1$ and $V_p(a) = p^k$ such that $p^k | a$ and $p^{k + 1} \nmid a$ ($p \geqslant 2$ is some integer). The structure $(\mathbb N, S, +, 0, V_p)$ is automatic and, in fact, universal for automatic structures under first-order interpretability (see \cite[Theorem 4.5]{Blumensath2004}). Therefore, $\mathsf{BA}_p$ is decidable, but it seems that no explicit axiomatization of this theory is known. Our aim is to construct such a one. Although our axiomatization is not quite natural, it can be a first step towards a better axiomatization of B{\"u}chi arithmetic. Our axiomatization consists of a few simple axioms for $S$, $+$ and $V_p$ and the following scheme for all formulas $\varphi(x)$ with exactly one free variable:
$$\exists x \varphi(x) \to \exists x \leqslant \underline{n_{\varphi}}\: \varphi(x),$$
where $n_\varphi$ is chosen such that the formula above holds in the standard model. Here $\underline{n}$ denotes the numeral $S^n(0)$. Our, rather rough,  estimate on $n_\varphi$ is $p^{2_{|\varphi|}^{3}}$, where $2_m^k$ is iterated exponent ($2_0^k = k, 2_{m + 1}^k = 2^{2_m^k}$) and $|\varphi|$ stands for the length of $\varphi$.

One of the main features of B{\"u}chi arithmetic is that definable sets are exactly those base-$p$ notations of which are recognizable by finite automata. 

\begin{theorem}[\cite{Buchi}, \cite{bruyare}]
    A set $A \subseteq \N^k$ is definable in $(\N, S, +, 0, V_p)$ iff the set of $p$-ary expansions of elements of $A$ is recognizable by a finite automaton.
\end{theorem}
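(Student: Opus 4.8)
I would prove the two implications by mutually inverse effective translations. Fix once and for all the convention that the $p$-ary expansion of a tuple $(x_1,\dots,x_k)$ is the word over the alphabet $\Sigma_p^{\,k}$, $\Sigma_p=\{0,1,\dots,p-1\}$, whose $i$-th letter (reading least significant digit first, $i\ge 0$) is the column formed by the $i$-th base-$p$ digits of $x_1,\dots,x_k$, each $x_j$ padded with leading zeros up to a common length. Accordingly let $\mathcal R$ be the family of relations $R\subseteq\N^{k}$ (over all $k$) such that the set of expansions of tuples in $R$ — with arbitrarily many extra leading zero-columns allowed — is regular; equivalently, $R$ is named by a regular language closed under inserting and deleting leading zero-columns. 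A short check shows that this closure property is preserved by complementation within $(\Sigma_p^{\,k})^{*}$, by intersection and union with a relation of the same arity, and by projecting out one alphabet track, so $\mathcal R$ is robust and is closed under the Boolean operations and existential quantification.

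\emph{From regularity to definability.} Given an automaton $\mathcal A=(Q,\Sigma_p^{\,k},\delta,q_0,F)$ with $Q=\{0,\dots,m-1\}$, I want an $\{S,+,0,V_p\}$-formula $\varphi_{\mathcal A}(x_1,\dots,x_k)$ true exactly when $\mathcal A$ accepts the expansion of $(x_1,\dots,x_k)$. Using $V_p$ one first defines $P(u)\equiv\exists t\,(V_p(t)=u)$ ("$u$ is a power of $p$"), and then, for each fixed digit $d<p$, a digit-extraction predicate $\mathrm{Dig}_d(x,u)\equiv P(u)\wedge\exists a<u\,\exists b\,\bigl(x=a+\underline d\cdot u+b\cdot(\underline p\cdot u)\bigr)$, meaning "$u=p^{i}$ and the $i$-th base-$p$ digit of $x$ is $d$" (here $\underline d\cdot u$ and $\underline p\cdot u$ abbreviate iterated sums, which is legitimate because $d$ and $p$ are constants). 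Now pick $c$ with $m\le p^{\,c}$ and an injection $\iota\colon Q\hookrightarrow\Sigma_p^{\,c}$, and encode an accepting run $q^{(0)},q^{(1)},\dots,q^{(L)}$ by $c$ numbers $z_1,\dots,z_c$ whose digit columns at position $p^{\,i}$ spell $\iota(q^{(i)})$. The formula $\varphi_{\mathcal A}$ asserts the existence of such $z_1,\dots,z_c$ subject to: the column of $(z_1,\dots,z_c)$ at $p^{0}$ spells $\iota(q_0)$; for every power of $p$ "$u$" strictly below the leading power $p^{\,L}$, with $u'=\underline p\cdot u$, the triple formed by the $z$-column at $u$, the $x$-column at $u$ and the $z$-column at $u'$ realizes some transition of $\delta$ — a finite disjunction of conjunctions of $\mathrm{Dig}_d$-literals; the column at every power of $p$ from $p^{0}$ up to $p^{\,L}$ lies in the image of $\iota$; and the column at $p^{\,L}$ spells $\iota(q)$ for some $q\in F$. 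The visibly fiddly ingredients are expressing the leading power $p^{\,L}$ in terms of $V_p$ and the $x_j$ and handling the degenerate cases ($x_j=0$, the padding), all of which can be absorbed into a finite case split inside $\varphi_{\mathcal A}$.

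\emph{From definability to regularity.} I would induct on the structure of the formula, using the closure properties of $\mathcal R$ recorded above for the Boolean connectives and for the existential quantifier, so that only the atomic relations remain. "$x=0$", "$x=y$" and "$S(x)=y$" are trivially in $\mathcal R$; "$x+y=z$" is recognized by the classical schoolbook-addition automaton that scans digit columns carrying a single bit; and "$V_p(x)=y$" is recognized by an automaton that scans columns least significant digit first and checks that $x$ has $0$ in every digit up to its first nonzero digit, that $y$ has a $1$ exactly there and $0$ elsewhere, with the degenerate branch $x=0$ (so $V_p(x)=\underline 1$) treated on its own.

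\emph{Main obstacle.} I expect the real work to sit in the first implication: turning "there exists an accepting run of $\mathcal A$" into a genuine $\{S,+,0,V_p\}$-formula requires encoding run positions as powers of $p$, reconciling the word length with the padding convention, and correctly handling the two boundary states — whereas the second implication is largely the bookkeeping of standard automata constructions.
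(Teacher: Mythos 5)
The paper does not prove this theorem at all: it is quoted as background, attributed to B\"uchi and Bruy\`ere, and is used later only through its consequence that every definable set is $p$-recognizable (in the proof of soundness). So there is no in-paper argument to compare against; judged on its own, your sketch is the classical B\"uchi--Bruy\`ere proof and its overall structure is sound: digit extraction plus run-encoding by $c$ auxiliary numbers for the direction ``regular $\Rightarrow$ definable'', and structural induction with automata for the atomic graphs of $=$, $S$, $+$, $V_p$ for the converse. Two small points deserve repair. First, $\exists t\,(V_p(t)=u)$ defines ``$u$ is $0$ or a power of $p$'', since $V_p(0)=0$; you should take $P(u)\equiv(V_p(u)=u\wedge u\ne 0)$, although inside $\mathrm{Dig}_d$ the clause $\exists a<u$ happens to exclude $u=0$ anyway. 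Second, in the ``definable $\Rightarrow$ regular'' direction, the projection of a language closed under appending and deleting zero-columns at the most significant end is not itself literally closed under deleting them: the existential witness may force the remaining tracks to be padded, so short expansions of a projected tuple need not appear in the projected language. The correct statement is that the set of all expansions of the projected relation equals the closure of the projected language under removal of such zero-columns, and this closure (a quotient by zero-columns) is again regular, so the induction does go through once the ``short check'' is phrased this way; this is exactly how the paper's definition of $L(A)$ (which includes all padded expansions) wants it. With these repairs, and the bookkeeping you already flag about the leading power $p^{L}$ and the degenerate cases in the run encoding, your proposal coincides with the standard proof given in the cited sources.
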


We will use this result to establish the estimate above for $n_\varphi$. Also,  this theorem implies that $\mathsf{BA}_p$ is decidable.

In some sense, B{\"u}chi arithmetic is more complex than Presburger arithmetic, for example, not every formula is equivalent to an $\exists$-formula in $\mathsf{BA}_p$ (see \cite{haase}). So, we are not able to find an   axiomatization of $\mathsf{BA}_p$ similar to that of Presburger arithmetic. Nevertheless, the following result holds.

\begin{theorem}[\cite{VILLEMAIRE}, \cite{haase}]
    Every formula $\varphi$ is equivalent to an $\exists \forall$-formula in $\mathsf{BA}_p$.
\end{theorem}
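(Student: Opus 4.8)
The plan is to go through the automaton characterization. Let $\varphi(\bar x)$, $\bar x = (x_1,\dots,x_n)$, be given. By the theorem of B\"uchi and Bruy\`ere stated above, the set $\{\bar a \in \N^n : \N \models \varphi(\bar a)\}$ is recognized by a deterministic finite automaton $\mathcal A = (Q,\Sigma^n,\delta,q_0,F)$, $\Sigma = \{0,\dots,p-1\}$, reading $n$-tuples of base-$p$ expansions written most significant digit first and padded with leading zeros to a common length. Taking $\mathcal A$ minimal we may assume it is total and $\delta(q_0,\bar 0) = q_0$, which is harmless since prepending the all-zero symbol changes neither the represented tuple nor acceptance. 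It then suffices to express ``$\mathcal A$ accepts $\bar x$'' by a formula of the form $\exists\dots\forall\dots[\text{quantifier-free}]$, for that formula is equivalent to $\varphi$ over $\mathsf{BA}_p$ by construction.

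The idea is to existentially guess the (unique) run of $\mathcal A$ on $\bar x$ and then universally check that it is locally valid. Write $Q = \{0,\dots,m-1\}$ with $q_0 = 0$, let $\ell$ be the common digit-length and $t = p^\ell$; the run visits states $r_{\ell-1} = q_0, r_{\ell-2},\dots,r_0,r_{-1}$ with $r_{k-1} = \delta(r_k,\bar d_k)$, $\bar d_k$ the tuple of digits of $\bar x$ at position $p^k$, and $\mathcal A$ accepts iff $r_{-1}\in F$. I introduce variables $y_0,\dots,y_{m-1}$, with $y_q$ intended to be the number whose base-$p$ digit at position $p^{k+1}$ is $1$ when $r_k=q$ and $0$ otherwise (a $0/1$-string marking where the run sits in state $q$). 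The candidate formula is
$$\exists t\,\exists y_0\cdots\exists y_{m-1}\bigl[\,\text{(range)}\wedge\text{(init)}\wedge\text{(accept)}\wedge\text{(partition)}\wedge\text{(transitions)}\,\bigr],$$
where (range) says $t$ is a power of $p$ with $p \le t$ and $t > x_i$ for all $i$; (init) says the digit of $y_0$ at $t$ is $1$; (accept) says $\bigvee_{q\in F}$ the digit of $y_q$ at $1$ is $1$; (partition) says every $y_q$ has all digits in $\{0,1\}$, no two $y_q$ have digit $1$ at the same position, and $(p-1)(y_0+\dots+y_{m-1})+1 = p\,t$ — this last linear identity forces the union of the digit-$1$ positions to be exactly $\{p^0,\dots,p^\ell\}$, hence, with the first two clauses, a unique state at each of these positions; and (transitions) says that for every power of $p$ with $p\le u\le t$, writing $u = p\,v$, if $y_q$ has digit $1$ at $u$, $y_{q'}$ has digit $1$ at $v$, and the digits of $\bar x$ at $v$ form $\bar d$, then $q' = \delta(q,\bar d)$ — this being a finite conjunction over $q,q'\in Q$ and $\bar d\in\Sigma^n$.

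Making this come out as $\exists\forall$ is a matter of quantifier placement. The auxiliary predicates needed are: ``$u$ is a power of $p$'', quantifier-free as $V_p(u)=u$; the order between powers of $p$, quantifier-free as $V_p(u+w)=u \wedge u\ne w$ for ``$u<w$''; divisibility $u\mid s$ for $u$ a power of $p$, quantifier-free via $V_p$ by reduction to that order; ``the digit of $a$ at position $u$ is $d$'', which for fixed $d<p$ is $\exists r\exists s\,(a = r + \underline{d}\,u + s \wedge r<u \wedge p\,u\mid s)$ with $\underline{d}\,u$ abbreviating a $d$-fold sum; and ordinary $\le$, existentially definable. In (range), (init), (accept) these occur only positively at the top level, so their quantifiers merge into the leading $\exists$-block. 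In (partition) and (transitions) they occur only inside antecedents of implications or inside negations, so the hidden existentials become universals and are absorbed; each of these two clauses thus becomes a universal formula with quantifier-free matrix, and the two $\forall$-blocks merge. Prenexing $\exists t\,\exists\bar y[(\exists\text{-clauses})\wedge(\forall\text{-clauses})]$ then gives a formula $\exists\dots\forall\dots[\text{quantifier-free}]$.

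The main obstacle is exactly this bookkeeping: verifying that each auxiliary predicate can be placed so as not to create a third quantifier alternation, and checking that (partition) — one linear identity plus the $0/1$- and disjointness-conditions — really does force a single well-defined state at every relevant position, so that (transitions) pins the guessed run down to the true run of $\mathcal A$; I expect the digit-by-digit reasoning about (partition) to be the most delicate point. A secondary issue is the automaton normalization (minimality, totality, $\delta(q_0,\bar 0)=q_0$, equal-length padding): this is what lets a possibly over-long padded expansion — the reason (range) need not pin $t$ down exactly — still compute the correct verdict. The small lemma that ``power of $p$'', the order between powers of $p$, and divisibility by a power of $p$ are quantifier-free definable from $+$ and $V_p$ underlies everything and should be established first.
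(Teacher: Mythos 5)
The paper never proves this statement --- it is quoted from Villemaire and Haase as a known result --- so there is no internal proof to compare against; what you have written is essentially a reconstruction of the standard argument from those cited sources, and it is sound. The two points you flag as delicate do go through: for the (partition) clause, once each $y_q$ has $0/1$ digits and the $1$-positions are pairwise disjoint, the sum $y_0+\dots+y_{m-1}$ has no carries, so it is the number with digit $1$ exactly on the union of marked positions; the identity $(p-1)\sum_q y_q + 1 = pt = p^{\ell+1}$ forces $\sum_q y_q = 1+p+\dots+p^\ell$, and uniqueness of base-$p$ expansions then pins the marked positions to exactly $\{p^0,\dots,p^\ell\}$, one state per position. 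The quantifier bookkeeping also works as you describe, since in (partition) and (transitions) the digit predicates occur only under negations or in antecedents, so their witnesses turn into universally quantified variables, while in (range), (init), (accept) they occur positively and merge into the leading $\exists$-block. Three minor repairs: (i) $V_p(u)=u$ is also satisfied by $u=0$, so ``$u$ is a power of $p$'' must include $u\neq 0$ (harmless where you use it, but it should be said); (ii) the normalization $\delta(q_0,\bar 0)=q_0$ is not automatic from minimality alone --- it holds because the language of most-significant-digit-first representations is closed under prepending $\bar 0$, so $q_0$ and $\delta(q_0,\bar 0)$ are Myhill--Nerode equivalent and hence equal in the minimal DFA (or one can simply modify the automaton); (iii) the paper's convention (and its statement of the B\"uchi--Bruy\`ere theorem) reads expansions least significant digit first, so your msd-first automaton needs the standard remark that recognizability is closed under word reversal. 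With these small additions your outline is a correct proof of the theorem, matching in spirit the run-encoding technique used in the literature it cites.
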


The problem of axiomatizing B{\"u}chi arithmetic has been considered earlier by Alexander Zapryagaev and there is the following negative result.

\begin{theorem}[\cite{zapryagaev}]
    A theory consisting of the following axioms is not complete:
        \begin{itemize}
            \item [(i)] axioms of Presburger arithmetic;
            \item [(ii)] $V_2(x) = 0 \leftrightarrow x = 0$;
            \item [(iii)] $\neg \exists y (x = y + y) \rightarrow V_2(x) = 1$;
            \item [(iv)] $x = t + t \rightarrow V_2(x) = V_2(t) + V_2(t)$.
        \end{itemize}
\end{theorem}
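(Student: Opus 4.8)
The plan is to establish incompleteness by exhibiting a model $M'$ of the axioms (i)--(iv) together with a first-order sentence $\sigma$ in the language $\{S,+,0,V_2\}$ such that $\N\models\sigma$ while $M'\models\neg\sigma$. Since $\N$ with the intended interpretation of $V_2$ also satisfies (i)--(iv), this shows that the theory proves neither $\sigma$ nor $\neg\sigma$.

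The key observation is that (ii)--(iv) determine $V_2(x)$ uniquely only when $x$ has finite $2$-adic valuation. Indeed, if $x\neq 0$ is divisible by $2^k$ but not by $2^{k+1}$ for some standard $k$, then writing $x=2^k y$ with $y$ odd, iterating (iv) and then applying (iii) to $y$ forces $V_2(x)=\underline{2^k}$, while (ii) fixes $V_2(0)=0$. But a nonstandard model of Presburger arithmetic may contain a nonzero element $a$ divisible by $2^k$ for every standard $k$: each finite subset of the type $\{x\neq 0\}\cup\{2^k\mid x:k\in\N\}$ is satisfied in $\N$, so by compactness the whole type is realized in some model $M$ of Presburger arithmetic. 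On the set $D=\{x\in M:2^k\mid x\text{ for all }k\in\N\}$ — which contains $0$ and $a$ and is closed under addition, under multiplication by $2$, and under halving (using that models of Presburger arithmetic are torsion-free, so $x\mapsto 2x$ is injective and division by $2$ is single-valued) — the axioms impose essentially nothing beyond $V_2(0)=0$, $V_2(2x)=2V_2(x)$, and $V_2(x)\neq 0$ for $x\in D\setminus\{0\}$.

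Concretely, I would define $V_2$ on $M$ by $V_2(x)=2x$ for $x\in D$ and $V_2(x)=\underline{2^{v_2(x)}}$ for $x\notin D$, where $v_2(x)\in\N$ denotes the $2$-adic valuation of $x$ (well defined and standard, since $\{k:2^k\mid x\}$ is in that case a proper initial segment of $\N$). Checking $(M,V_2)\models$ (i)--(iv) is then routine: (i) by the choice of $M$; (ii) because $2x\neq 0$ for $x\in D\setminus\{0\}$ and $\underline{2^{v_2(x)}}\neq 0$ otherwise; (iii) because an odd element lies outside $D$ and has valuation $0$; and (iv) by splitting into the cases $t\in D$ and $t\notin D$, using $2t\in D\iff t\in D$ and $v_2(2t)=v_2(t)+1$. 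Finally let $\sigma$ be $\forall x\,(V_2(V_2(x))=V_2(x))$: in $\N$ every value of $V_2$ is a power of $2$ and hence a fixed point of $V_2$, so $\N\models\sigma$; but in $M'$ we have $V_2(V_2(a))=V_2(2a)=4a\neq 2a=V_2(a)$ because $a\neq 0$, so $M'\models\neg\sigma$. (One may use instead the sentence $\forall x\,\exists z\,(V_2(x)+z=x)$, which fails at $a$ since $V_2(a)=2a>a$.) Either way (i)--(iv) is incomplete.

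I expect the main obstacle to be not conceptual but a matter of care: getting the compactness argument right so that $M$ really contains an infinitely-$2$-divisible nonzero element, verifying the closure properties of $D$ that make the definition of $V_2$ consistent with \emph{every} instance of (iv), and confirming that the values dictated by (iii)--(iv) outside $D$ coincide with $\underline{2^{v_2(x)}}$, so that nothing breaks at the boundary between $D$ and $M\setminus D$. The remaining verifications are straightforward.
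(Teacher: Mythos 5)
Your argument is essentially correct, and it is worth noting that the paper itself contains no proof of this statement: the theorem is quoted from \cite{zapryagaev}, accompanied only by the remark that the particular true sentence $V_2(x)=x \rightarrow \neg\exists y\,(x<y<x+x \wedge V_2(y)=y)$ is unprovable. So your proof is a self-contained reconstruction rather than a variant of an argument given in the paper. Under the natural reading of (i) as (a complete axiomatization of) Presburger arithmetic in the $V_2$-free language $\{0,S,+\}$ --- so that expanding any Presburger model by an arbitrary interpretation of $V_2$ automatically preserves (i) --- your verification goes through: all the facts you use about the nonstandard model $M$ (cancellation, absence of $2$-torsion, the numeral identities $\underline{2^k}+\underline{2^k}=\underline{2^{k+1}}$, and that $\{k : 2^k \mid x\}$ is an initial segment of $\N$) are Presburger theorems, and the case split defining $V_2(x)=2x$ on the infinitely $2$-divisible part $D$ and $V_2(x)=\underline{2^{v_2(x)}}$ off $D$ is consistent with (ii)--(iv) exactly as you check. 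Your independence witness differs from the one the paper highlights: in your model the fixed points of $V_2$ are precisely $0$ and the standard powers of $2$, so the paper's sentence about powers of $2$ between $2^k$ and $2^{k+1}$ is actually \emph{true} there, while $\forall x\, V_2(V_2(x))=V_2(x)$ (or $\forall x\,\exists z\,(V_2(x)+z=x)$) fails at your element $a$; either witness suffices for incompleteness. One caveat you should state explicitly: if (i) were instead read as including an induction scheme over the language enriched with $V_2$, your model would no longer obviously qualify (induction fails in it, e.g., for the formula $\exists z\,(V_2(x)+z=x)$), so your proof establishes the theorem only under the standard $V_2$-free reading of the Presburger axioms, which is evidently the one intended here.
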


In particular, this theory does not prove the following true (in the standard model) formula:
$$V_2(x) = x \rightarrow \neg \exists y (x < y < x + x \wedge V_p(y) = y)$$
(which says <<there is no power of 2 between $2^k$ and $2^{k + 1}$>>).

\section{Notations and conventions}

For integer an $p \geqslant 2$, we denote by $V_p$ the function $\mathbb N \to \mathbb N$ such that $V_p(0) = 0$ and $V_p(n) = p^k$ for all $n > 0$, where $p^k | n$ and $p^{k + 1} \nmid n$. By $S$ we denote the function $n \mapsto n  + 1$.

By the \emph{standard model} we mean the structure $(\mathbb N, S, +, 0, V_p)$. To simplify notation it is usually denoted as $\mathbb N$. Arbitrary structures are denoted by $\mathcal M, \mathcal N, \dots$ and their domains by  $M, N, \dots$. 

\emph{Base-$p$ B{\"u}chi arithmetic} is the elementary theory of the standard model $(\mathbb N, S, +, 0, V_p)$. We denote this theory $\mathsf{BA}_p$. In the language of this theory, we denote by $\underline n$ the numeral $S(S(\dots S(0) \dots)) = S^n(0)$, where $n \in \N$. For $n \in \N, n > 0$ and $t$ a term, we denote by $n t$ the term $\underbrace{(\dots((t + t) + t) \dots)}_{n\text{ times}}$. 

By a \emph{finite automaton} (FA for short) over a finite alphabet $\Sigma$ (usually, $\Sigma$ will be $\{0, 1, \dots, p - 1\}$ or $\{0, 1, \dots, p - 1\}^k$) we mean a tuple $(\Sigma, Q, q_0, F, \Delta)$, where $Q$ is a finite set of states, $q_0 \in Q$ is an initial state, $F \subseteq Q$ is the set of final (or accepting) states, $\Delta \subseteq Q \times \Sigma \times Q$ is the set of transitions. By a \emph{determenistic finite automaton} (DFA for short) over a finite alphabet $\Sigma$ we mean a FA, where $\Delta$ is a function $Q \times \Sigma \to Q$ (i.e. for all $(q, a) \in Q \times \Sigma$ there exists a unique $q' \in Q$ such that $(q, a, q') \in \Delta$). For a FA $M$ we denote by $L(M)$ the language recognized by $M$. 

Denote by $\Sigma_p$ the alphabet $\{0, \dots, p - 1\}$. For $n \in \N$ we denote by $(n)_p \in \Sigma_p^*$ the $p$-ary expansion of $n$. That is, if $n = 0$, then $(n)_p = \Lambda$ (the empty word) and if $n = w_k p^k + \dots + w_0$, where $w_k, \dots, w_0 \in \{0, \dots, p - 1\}$ and $w_k \ne 0$, then $(n)_p = w_0 \dots w_k$. For an arbitrary $w \in \Sigma_p$ we denote by $[w]_p$ the number $w_{k - 1} p^{k - 1} + \dots + w_0$, where $w = w_0 \dots w_{k - 1}$ (we set $[\Lambda]_p = 0$). Moreover, for $w \in (\Sigma_p^k)^*$ we set $[w]_p = ([w^1]_p, \dots, [w^k]_p) \in \N^k$, where $(w^j)_i = (w_i)_j$. For example, for $p = 5$ and $k = 3$, we have $[(1, 2, 3)(2, 3, 1)(3, 1, 2)]_5 = (1 + 2 \cdot 5 + 3 \cdot 5^2, 2 + 3 \cdot 5 + 1 \cdot 5^2, 3 + 1 \cdot 5 + 1 \cdot 5^2) = (86, 42, 33)$. Of course, $[\cdot]_p$ is not one-to-one.

We say that $A \subseteq \N$ is {$p$-recognizable} if the language $L(A) := \{w \in \Sigma_p^* | [w]_p \in A\}$ is recognizable by a (determenistic) finite automaton. Moreover, we can generalize this notion to the subsets of $\N^k$. We say that $A \subseteq \N^k$ is {$p$-recognizable} if the language $L(A) := \{w \in (\Sigma_p^k)^* | [w]_p \in A\}$ is recognizable by a (determenistic) finite automaton.

We will use the following well-known facts about finite automata.

\begin{proposition}\label{prop_about_automata}
    \begin{enumerate}
        \item[$(i)$] If $M = (\Sigma, Q, q_0, F, \Delta)$ is DFA, then $M' := (\Sigma, Q, q_0, Q \setminus F, \Delta)$ is also DFA and, moreover, $L(M') = \Sigma^* \setminus L(M)$.
        
        \item[$(ii)$] If $M_0 = (\Sigma, Q_0, q_0, F_0, \Delta_0)$ and $M_1 = (\Sigma, Q_1, q_1, F_1, \Delta_1)$ are DFA, then $M := (\Sigma, Q_0 \times Q_1, (q_0, q_1), F_0 \times F_1, \Delta)$ is also DFA, where $\Delta((q, q'), a) = (\Delta_0(q, a), \Delta_1(q', a))$, and, moreover, $L(M) = L(M_0) \cap L(M_1)$.
        
        \item[$(iii)$] If $M = (\Sigma, Q, q_0, F, \Delta)$ is FA, then there if a DFA $M'$ with the set of states $\mathcal P (Q)$ such that $L(M') = L(M)$.
    \end{enumerate}
\end{proposition}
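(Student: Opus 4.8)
The plan is to establish the three items by the standard textbook constructions, in each case passing to the \emph{extended transition function} of the automaton and arguing by induction on the length of the input word. Recall that for a DFA the transition relation is a total single-valued function $\Delta \colon Q \times \Sigma \to Q$, which extends uniquely to $\widehat\Delta \colon Q \times \Sigma^* \to Q$ by $\widehat\Delta(q, \Lambda) = q$ and $\widehat\Delta(q, wa) = \Delta(\widehat\Delta(q, w), a)$, and that $w \in L(M)$ iff $\widehat\Delta(q_0, w) \in F$. For $(i)$, the automaton $M'$ has exactly the same $\Delta$, hence the same $\widehat\Delta$, so $w \in L(M)$ iff $\widehat\Delta(q_0, w) \in F$ iff $\widehat\Delta(q_0, w) \notin Q \setminus F$ iff $w \notin L(M')$; thus $L(M') = \Sigma^* \setminus L(M)$. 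Note that both totality and single-valuedness of $\Delta$ are used here, and that $M'$ is a DFA for the same reason $M$ is.

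For $(ii)$, let $\widehat\Delta_0, \widehat\Delta_1, \widehat\Delta$ be the extended transition functions of $M_0, M_1$ and of the product automaton $M$. A routine induction on $|w|$, whose inductive step is just the definition $\Delta((q,q'),a) = (\Delta_0(q,a), \Delta_1(q',a))$, shows $\widehat\Delta((q_0,q_1), w) = (\widehat\Delta_0(q_0, w), \widehat\Delta_1(q_1, w))$ for every $w \in \Sigma^*$. Hence $w \in L(M)$ iff $\widehat\Delta((q_0,q_1), w) \in F_0 \times F_1$ iff $\widehat\Delta_0(q_0,w) \in F_0$ and $\widehat\Delta_1(q_1,w) \in F_1$ iff $w \in L(M_0) \cap L(M_1)$. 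That $M$ is a DFA is immediate from the definition of its transition function.

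For $(iii)$ I would use the subset construction: put $M' = (\Sigma, \mathcal P(Q), \{q_0\}, \mathcal F, \Delta')$ with $\Delta'(T, a) = \{\, q' \in Q \mid \exists q \in T \ (q,a,q') \in \Delta \,\}$ and $\mathcal F = \{\, T \subseteq Q \mid T \cap F \ne \emptyset \,\}$; this is a DFA because $\Delta'$ is a genuine function $\mathcal P(Q) \times \Sigma \to \mathcal P(Q)$. Writing $\widehat\Delta'$ for its extension, an induction on $|w|$ (base case $\widehat\Delta'(\{q_0\}, \Lambda) = \{q_0\}$, step from the definition of $\Delta'$) shows that $\widehat\Delta'(\{q_0\}, w)$ is exactly the set of states of $M$ reachable from $q_0$ on reading $w$. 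Therefore $w$ is accepted by $M'$ iff this set meets $F$, which is precisely the condition for $w$ to be accepted by $M$, so $L(M') = L(M)$.

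All three are the classical constructions, so there is no genuine obstacle; the only points needing a little care are the three inductions on word length establishing the behaviour of the extended transition functions, and, in $(i)$, the remark that it is precisely determinism — both the totality and the uniqueness of transitions — that makes complementation work by simply swapping the set of accepting states.
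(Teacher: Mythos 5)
Your proof is correct: all three items are handled by the classical constructions (complementing the accepting set of a DFA, the product automaton, and the subset construction), with the extended transition function and induction on word length supplying the routine verifications. The paper itself states this proposition without proof, as a well-known fact, and your argument is exactly the standard one it implicitly relies on, so there is nothing to add.
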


\section{Axiomatization}

We will denote our axiomatization by $\mathsf{T_{BA}}_p$.

\begin{definition}
    $\mathsf{T_{BA}}_p$ consists of the following axioms:
    \begin{itemize}
        \item [$(S0)$] $S x = S y \to x = y$;
        \item [$(S1)$] $0 \ne S x$;
        \item [$(S2)$] $x = 0 \vee \exists y (x = S y)$;
        \item [$(A0)$] $x + 0 = x$;
        \item [$(A1)$] $x + S y = S(x + y)$;
        \item [$(V0)$] $V_p(0) = 0$;
        \item [$(V1)$] $V_p(\underline{1}) = \underline{1}$;
        \item [$(V2)$] $V_p(p x) = p V_p(x)$;
        \item [$(V3)$] $\bigwedge\limits_{i = 1}^{p - 1} (V_p(p x + \underline{i}) = \underline{1})$;
        \item [$(Bound_\varphi)$] $\exists x \varphi(x) \to \exists x \leqslant \underline{n_{\varphi}}\: \varphi(x),$ for each formula $\varphi(x)$ with exactly one free variable,        
        where $n_\varphi = p^{2_{|\varphi|}^{3}}$.
    \end{itemize}
\end{definition}

First we argue that this axiomatization is sound.

\begin{theorem}\label{theorem_soundness}
    $(\mathbb N, S, +, 0, V_p) \vDash \mathsf{T_{BA}}_p$ (i.e. $\mathsf{BA}_p \vDash \mathsf{T_{BA}}_p$).
\end{theorem}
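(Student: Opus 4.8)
The plan is to verify each axiom of $\mathsf{T_{BA}}_p$ holds in the standard model $(\mathbb{N}, S, +, 0, V_p)$ separately, handling the purely arithmetical axioms $(S0)$–$(A1)$ first, then the axioms $(V0)$–$(V3)$ governing $V_p$, and finally the scheme $(Bound_\varphi)$.

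\begin{proof}
The axioms $(S0)$, $(S1)$, $(S2)$, $(A0)$, $(A1)$ are the standard Peano-style axioms for successor and addition, and they evidently hold in $\mathbb{N}$: $S$ is injective with $0$ outside its range, every nonzero natural number is a successor, and the two equations define addition recursively. So I would dispatch these with a sentence.

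For the $V_p$ axioms, I would first recall the definition: $V_p(0) = 0$ and for $n > 0$, $V_p(n) = p^k$ where $p^k \mid n$ and $p^{k+1} \nmid n$. Then $(V0)$ is immediate. For $(V1)$: since $p \nmid 1$, we have $k = 0$, so $V_p(1) = p^0 = 1$. For $(V3)$: if $1 \leqslant i \leqslant p - 1$, then $pn + i \equiv i \not\equiv 0 \pmod p$, so $p \nmid (pn + i)$, hence $V_p(pn + i) = 1$; note $pn + i > 0$ since $i \geqslant 1$, so the nonzero case applies. For $(V2)$: if $n = 0$ then $V_p(p \cdot 0) = V_p(0) = 0 = p \cdot 0 = p \cdot V_p(0)$; if $n > 0$, write $V_p(n) = p^k$ with $p^k \mid n$, $p^{k+1} \nmid n$, so $p^{k+1} \mid pn$ and $p^{k+2} \nmid pn$, whence $V_p(pn) = p^{k+1} = p \cdot p^k = p \cdot V_p(n)$. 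Here I should be slightly careful to note that in the formal statement $px$ abbreviates the term $x + x + \cdots + x$ ($p$ times), which indeed evaluates to $p \cdot n$, and likewise $\underline{1}$ evaluates to $1$; these are routine facts about numerals and iterated addition that follow from $(A0)$–$(A1)$ in the standard model.

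The only substantive part is the scheme $(Bound_\varphi)$: for each formula $\varphi(x)$ with one free variable, if $\exists x\, \varphi(x)$ holds in $\mathbb{N}$ then there is a witness $x \leqslant \underline{n_\varphi}$ with $n_\varphi = p^{2^3_{|\varphi|}}$. The key input is the Büchi–Bruyère theorem quoted in the excerpt: the set $A_\varphi = \{x \in \mathbb{N} : \mathbb{N} \models \varphi(x)\}$ is $p$-recognizable, so $L(A_\varphi)$ is accepted by a DFA whose number of states can be bounded in terms of $|\varphi|$. If $A_\varphi$ is nonempty, then $L(A_\varphi)$ is nonempty, so the accepting DFA with $N$ states accepts some word of length $< N$ (pumping/shortest-path argument), and any such word $w$ codes a number $[w]_p < p^{N}$. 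Thus it suffices to show the state bound $N$ is at most $p^{2^3_{|\varphi|}} $ — more precisely, small enough that $p^N \leqslant p^{2^3_{|\varphi|}}$, i.e. $N \leqslant 2^3_{|\varphi|}$. This is the expected main obstacle: one must track, through the inductive translation of formulas into automata (atomic formulas give automata of bounded size; Boolean connectives multiply/complement state sets, costing at most a square or leaving the count fixed; existential quantifiers project and then determinize, costing an exponential $2^N$), that the state count after processing $\varphi$ stays within the tower $2^3_{|\varphi|}$. Since each quantifier contributes one exponentiation and the formula has at most $|\varphi|$ symbols, a tower of height $O(|\varphi|)$ suffices, and the constant $3$ in the exponent height (giving $2^3_{|\varphi|}$, a tower of height $|\varphi| + 3$ over base... ) absorbs the polynomial overhead from the Boolean operations and the atomic automata. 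I would state this as a lemma bounding the DFA size for $\varphi$ and defer its proof, or carry out the induction here if space permits; the arithmetic that $p^{(\text{DFA size})} \leqslant n_\varphi$ is then a routine comparison of iterated exponentials.
\end{proof}
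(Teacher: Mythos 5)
Your proposal follows essentially the same route as the paper: the non-scheme axioms are checked directly, and $(Bound_\varphi)$ is verified via the B\"uchi--Bruy\`ere theorem, a shortest-accepted-word argument, and an inductive bound on the size of a DFA recognizing the set defined by $\varphi$ --- which is exactly the induction the paper carries out (atomic automata with $\leqslant 3$ states, products for conjunctions, determinization after projection for quantifiers, yielding $\leqslant 2_{|\varphi|}^3$ states and hence a witness $\leqslant p^{2_{|\varphi|}^3} = n_\varphi$). The only part you defer that the paper spells out is the bookkeeping itself, in particular first replacing $\varphi$ by an equivalent formula whose atomic subformulas all have the flattened forms $x = y$, $S(x) = y$, $x + y = z$, $V_p(x) = y$ with distinct variables (so that the connective-and-quantifier count $N_{\tilde\varphi} \leqslant |\varphi|$ can be used), but your sketch correctly identifies all the ingredients and the reason the iterated-exponential bound closes.
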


\begin{proof}
    It is clear that axioms $(S0)-(S2), (A0)-(A1), (V0)-(V3)$ holds in the standard model. It is enough to show that the scheme $(Bound_\varphi)$ holds in $\mathbb N$.

    The main idea is the following. Consider any formula $\varphi$ and an automaton $M_\varphi$ that recognizes the set, defined by $\varphi$. If there is a word, acceptable by $M_\varphi$ (i.e. $\exists x \varphi(x)$), then there is a path from an initial state to a final state, so, there is a word of length less or equal the number of states of $M_\varphi$, say $m$, acceptable by $M_\varphi$. Hence, there is $x\in \mathbb N$ with $p$-ary expansion of length at most $m$ (i.e. $\leqslant p^m$) such that $\varphi(x)$ holds. So, our task is to estimate the number of states of $M_\varphi$.

    Denote by $A_S$ the graph of the fucntion $S$, i.e. the set $\{(x, y) \in \N | S(x) = y\}$, similarly we define $A_{V_p}, A_+$ and $A_=$. It is not very hard to construct DFAs $M_S, M_{V_p}, M_+$ and $M_=$ recognizing $L(A_S), L(A_{V_p}), L(A_+)$ and $L(A_=)$ respectively, with no more $3$ states. 

    Firstly, let $\varphi(x_1, \dots, x_k)$ be a formula such that all occurrences of atomic formulas in it are of the form $x = y$, $S(x) = y$, $x + y = z$ and $V_p(x) = y$, where $x, y, z$ are distinct variables. We prove by induction on $\varphi$ that there is a DFA recognizing $L(\{(n_1, \dots, n_k) \in \N^k | \N \vDash \varphi(n_1, \dots, n_k)\})$ with no more than $2_{N_\varphi}^3$ states, where $N_\varphi$ is the number of boolean connectives $\wedge, \vee, \to$ and quantifiers in $\varphi$. We denote the latter set by $L_\varphi$.

    The base cases are already treated. 

    If $\varphi(x_1, \dots, x_k) = \neg \psi(x_1, \dots, x_k)$, then $L_\psi$ is recognized by a DFA $M = (\Sigma_p^k, Q, q_0, F, \Delta)$, $|Q| \leqslant 2_{N_\psi}^3$. By Proposition \autoref{prop_about_automata}, $L_\varphi$ is recognized by $(\Sigma, Q, q_0, Q \setminus F, \Delta)$ and $|Q| \leqslant  2_{N_\varphi}^3 = 2_{N_\psi}^3$.

    If 
    $$\varphi(x_1, \dots, x_k, y_1, \dots, y_l, z_1, \dots, z_m) = (\psi_0(x_1, \dots, x_k, y_1, \dots, y_l) \wedge \psi_1(y_1, \dots, y_l, z_1, \dots, z_m)),$$ 
    where $\{x_1, \dots, x_k, y_1, \dots, y_l\}$ are all distinct free variables in $\psi_0$, $\{y_1, \dots, y_l, z_1, \dots, z_m\}$ are all distinct free variables in $\psi_1$, then $L_{\psi_0}$ is recognized by a DFA 
    $$M_0 = (\Sigma_p^{k + l}, Q_0, q_0, F_0, \Delta_0),$$
    where $|Q_0| \leqslant 2_{N_{\psi_0}}^3$, and 
    $L_{\psi_1}$ is recognized by a DFA 
    $$M_1 = (\Sigma_p^{l + m}, Q_1, q_1, F_1, \Delta_1),$$ 
    where $|Q_1| \leqslant 2_{N_{\psi_1}}^3$. 
    Let 
    $M_0' := ((\Sigma_p)^{k + l + m}, Q_0, q_0, F_0, \Delta_0'),$ 
    where 
    $$\Delta_0'(q, (a_1, \dots, a_{k + l + m})) = \Delta_0(q, (a_1, \dots, a_{k + l}))$$ 
    and 
    $M_1' := ((\Sigma_p)^{k + l + m}, Q_1, q_1, F_1, \Delta_1'),$
    where 
    $$\Delta_1'(q, (a_1, \dots, a_{k + l + m})) = \Delta_1(q, (a_{k + 1}, \dots, a_{k + l + m})).$$ 
    Clearly, $L_\varphi = L(M_0') \cap L(M_1')$. Then, using Proposition \autoref{prop_about_automata}, we obtain that there is a DFA $M$ recognizing $L_\varphi$ with $|Q_0|\cdot |Q_1|$ states. But $|Q_0|\cdot |Q_1| \leqslant 2_{N_{\psi_0}}^3 \cdot 2_{N_{\psi_1}}^3 \leqslant 2_{N_{\psi_0} + N_{\psi_1} + 1}^3 = 2_{N_{\varphi}}^3$.

    The cases of $\varphi = (\psi_0 \vee \psi_1)$ and $\varphi = (\psi_0 \to \psi_1)$ can be treated similarly to the previous cases using $\varphi \leftrightarrow \neg(\neg \psi_0 \wedge \neg \psi_1)$ and $\varphi \leftrightarrow \neg(\psi_0 \wedge \neg \psi_1)$ respectively. 

    If $\varphi(x_1, \dots, x_k) = \exists x \psi(x_1, \dots, x_k, x)$, then $L_\psi$ is recognized by DFA $M = ((\Sigma_p)^{k + 1}, Q, q_0, F, \Delta)$, $|Q| \leqslant 2_{N_\psi}^3$. Let $M$ be a FA $((\Sigma_p)^{k}, Q, q_0, F, \Delta')$, where $\Delta' = \{(q, (a_1, \dots, a_{k}), q') \in Q \times (\Sigma_p)^{k} \times Q | \exists a \in \Sigma_p: (q, (a_1, \dots, a_{k}, a), q') \in \Delta\}$. Informally saying, we cleared the last symbol on each edge in our automaton. Clearly, $L_\varphi$ is recognized by $M'$, but $M'$ can be non-deterministic. To construct a DFA $M''$ recognizing $L_\varphi$ one can apply Proposition \autoref{prop_about_automata}, then the number of states in $M''$ is $2^{|Q|} \leqslant 2^{2_{N_\psi}^3} = 2_{N_\varphi}^3$.

    The case of $\varphi(x_1, \dots, x_k) = \forall x \psi(x_1, \dots, x_k, x)$ can be treated similarly using the equivalence $\varphi(x_1, \dots, x_k) \leftrightarrow \neg \exists x \neg \psi(x_1, \dots, x_k, x)$.

    Then, let $\varphi(x)$ be an arbitrary formula with exactly one free variable. Clearly, there is an equivalent formula $\tilde \varphi(x)$ such that all occurrences of atomic formulas in it are of the form $x = y$, $S(x) = y$, $x + y = z$ and $V_p(x) = y$, where $x, y, z$ are distinct variables. It is not very hard to see that $N_{\tilde \varphi} \leqslant |\varphi|$. Hence, $L(\{n \in \N| \N \vDash \varphi(n)\})$ is recognizable by a DFA with $\leqslant 2_{|\varphi|}^3$ states. 
\end{proof}

Further we need to prove completeness, i.e. that $\mathsf{T_{BA}}_p$ proves any sentence, which is true in the standard model. 

\begin{theorem}\label{theorem_completeness}
    $\mathsf{T_{BA}}_p \vDash \mathsf{BA}_p$, i.e. $\mathsf{T_{BA}}_p$ is complete.
\end{theorem}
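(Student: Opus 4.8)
The plan is to deduce completeness from \autoref{theorem_soundness} together with the following claim, proved by induction on the structure of the formula: \emph{for every formula $\varphi(x_1,\dots,x_k)$ (with free variables among $x_1,\dots,x_k$) and all $n_1,\dots,n_k \in \N$, we have $\mathsf{T_{BA}}_p \vdash \varphi(\underline{n_1},\dots,\underline{n_k})$ if $\N \vDash \varphi(n_1,\dots,n_k)$, and $\mathsf{T_{BA}}_p \vdash \neg\varphi(\underline{n_1},\dots,\underline{n_k})$ otherwise.} Specialising to sentences ($k=0$) gives $\mathsf{T_{BA}}_p \vdash \sigma$ for every $\sigma \in \mathsf{BA}_p$; since \autoref{theorem_soundness} gives the converse inclusion, $\mathsf{T_{BA}}_p$ and $\mathsf{BA}_p$ have the same theorems, so $\mathsf{T_{BA}}_p$ is complete (equivalently, every model of $\mathsf{T_{BA}}_p$ satisfies $\mathsf{BA}_p$).

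Two auxiliary facts are needed, each an external induction carried out inside $\mathsf{T_{BA}}_p$. (a) \emph{Discreteness}: $\mathsf{T_{BA}}_p \vdash \underline k \ne \underline l$ whenever $k \ne l$, and, for every $N$, $\mathsf{T_{BA}}_p \vdash \forall y\,\bigl(y \leqslant \underline N \leftrightarrow \bigvee_{m=0}^{N} y = \underline m\bigr)$; this follows from $(S0)$--$(S2)$ and $(A0)$--$(A1)$ by induction on $N$, unfolding $y \leqslant \underline N$ as $\exists z\,(y+z=\underline N)$ and using $(A1)$, $(S0)$, $(S1)$. (b) \emph{Term evaluation}: for every term $t(x_1,\dots,x_k)$, $\mathsf{T_{BA}}_p \vdash t(\underline{n_1},\dots,\underline{n_k}) = \underline{t^{\N}(n_1,\dots,n_k)}$; this is an induction on $t$, the cases of a variable, $0$ and $S$ being immediate, the case of $+$ using $(A0)$--$(A1)$, and the case of $V_p$ requiring a nested induction on the value $m$ of the argument: $(V0)$ for $m=0$; $(V3)$ for $m = pm'+i$ with $1\leqslant i\leqslant p-1$, after rewriting $\underline m$ as $p\underline{m'}+\underline i$; and $(V2)$ together with the nested hypothesis for $m=pm'$, $m'\geqslant 1$.

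Now the induction on $\varphi$. For atomic $\varphi$, term evaluation rewrites $\varphi(\underline{\mathbf n})$ as an equality of two fixed numerals, which discreteness decides. The cases $\varphi=\neg\psi$, $\psi_0\wedge\psi_1$, $\psi_0\vee\psi_1$, $\psi_0\to\psi_1$ follow at once from the induction hypothesis. For $\varphi(\mathbf x)=\exists y\,\psi(\mathbf x,y)$ and fixed $\mathbf n$: if $\N\vDash\varphi(\mathbf n)$, choose a witness $m$, apply the induction hypothesis to get $\mathsf{T_{BA}}_p\vdash\psi(\underline{\mathbf n},\underline m)$, and conclude $\mathsf{T_{BA}}_p\vdash\exists y\,\psi(\underline{\mathbf n},y)$. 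If $\N\not\vDash\varphi(\mathbf n)$, put $\chi(y):=\psi(\underline{n_1},\dots,\underline{n_k},y)$, a formula with exactly one free variable (if $y$ does not occur in $\psi$, use the induction hypothesis directly). The axiom $(Bound_\chi)$ yields $\mathsf{T_{BA}}_p\vdash\exists y\,\chi(y)\to\exists y\leqslant\underline{N}\,\chi(y)$ with $N=n_\chi$, and then discreteness gives $\mathsf{T_{BA}}_p\vdash\exists y\,\chi(y)\to\bigvee_{m=0}^{N}\chi(\underline m)$. Since $\N\not\vDash\psi(\mathbf n,m)$ for all $m$, the induction hypothesis gives $\mathsf{T_{BA}}_p\vdash\neg\chi(\underline m)$ for every $m\leqslant N$, whence $\mathsf{T_{BA}}_p\vdash\neg\exists y\,\chi(y)$, i.e.\ $\mathsf{T_{BA}}_p\vdash\neg\varphi(\underline{\mathbf n})$. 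The case $\varphi=\forall y\,\psi$ reduces to the above via $\varphi\leftrightarrow\neg\exists y\,\neg\psi$. (As in the proof of \autoref{theorem_soundness}, one may first replace $\varphi$ by a logically equivalent formula whose atomic subformulas all have the form $x=y$, $Sx=y$, $x+y=z$, $V_p(x)=y$ with distinct variables; this reduction is valid in pure logic and does not affect provability.)

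The crux is the negative $\exists$-case, which is the only place $(Bound_\varphi)$ is invoked: the scheme is precisely what converts the unbounded statement $\neg\exists y\,\chi(y)$ into the finite conjunction $\bigwedge_{m\leqslant N}\neg\chi(\underline m)$, each conjunct being decided by the induction hypothesis. The point to handle with care is that $(Bound_\chi)$ is applied to a formula $\chi$ obtained from $\psi$ by substituting \emph{numeral parameters}; this is legitimate because numerals are closed terms, so $\chi$ is a genuine one-free-variable formula and $(Bound_\chi)$ a genuine axiom — and although $|\chi|$, hence the bound $N=n_\chi$, grows with the $n_i$, for each fixed $\mathbf n$ we produce only a single concrete derivation, so there is no circularity. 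Everything else (the two auxiliary lemmas, the atomic and Boolean cases) is routine bookkeeping.
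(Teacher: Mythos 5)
Your proposal is correct and takes essentially the same route as the paper's syntactic proof of \autoref{theorem_completeness}: your auxiliary facts (a)--(b) are Lemma \ref{lemma_about_terms}, and your handling of the existential case --- applying $(Bound_\chi)$ to the numeral-instantiated formula $\chi$ and using $x \leqslant \underline{n} \leftrightarrow \bigvee_{k \leqslant n} x = \underline{k}$ to pass to a finite disjunction of numeral instances decided by the induction hypothesis --- is exactly the mechanism of Lemmas \ref{lemma_about_quantifier_free_sentences} and \ref{lemma_about_quantifier_elimination_in_sentences}, merely packaged as a single induction on formulas with numeral parameters instead of a reduction of sentences to decided quantifier-free sentences. (The paper additionally gives a second, model-theoretic proof via the Omitting Types Theorem, which your argument does not require.)
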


We will gives two proofs of the result above: a syntactical and a model-theoretic ones.

\section{A model-theoretic proof of completeness}

We begin with the following general (and rather trivial) lemma.

\begin{lemma}\label{lemma_about_types}
    Let $T$ be a consistent theory in at most countable language, $p(\vec x)$ be a non-isolated (possibly incomplete) type in $T$ and $\psi$ be a consistent with $T$ sentence. Then $p(\vec x)$ is non-isolated in $T + \psi$.
\end{lemma}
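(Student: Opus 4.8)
The plan is to argue by contraposition: assume $p(\vec x)$ is isolated in $T + \psi$ and deduce that it is already isolated in $T$. Recall that a set of formulas $p(\vec x)$ is isolated in a theory $T'$ if there is a formula $\theta(\vec x)$ such that $T' \cup \{\exists \vec x\, \theta(\vec x)\}$ is consistent and $T' \vdash \forall \vec x\,(\theta(\vec x) \to \varphi(\vec x))$ for every $\varphi \in p$. So suppose $\theta(\vec x)$ witnesses the isolation of $p$ in $T + \psi$.

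The candidate isolating formula over $T$ is $\psi \wedge \theta(\vec x)$, which has exactly the free variables $\vec x$ since $\psi$ is a sentence. Two routine observations finish the argument. First, by the deduction theorem, for every $\varphi \in p$ we have $T + \psi \vdash \forall \vec x\,(\theta \to \varphi)$, hence $T \vdash \psi \to \forall \vec x\,(\theta \to \varphi)$, hence $T \vdash \forall \vec x\,((\psi \wedge \theta) \to \varphi)$. Second, since $\psi$ is a sentence, $T + \psi + \exists \vec x\, \theta$ is consistent if and only if $T + \exists \vec x\,(\psi \wedge \theta)$ is consistent; and the former holds by hypothesis. Therefore $\psi \wedge \theta$ isolates $p$ in $T$, contradicting the assumption that $p$ is non-isolated in $T$.

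I do not expect any genuine obstacle here — the paper itself flags the lemma as "rather trivial." The only point requiring a little care is bookkeeping around the definition of isolation for possibly incomplete types: one must keep the consistency clause $T' \cup \{\exists \vec x\, \theta\}$ in play (otherwise an inconsistent $\theta$ would vacuously "isolate" anything), and one must use that $\psi$ carries no free variables so that conjoining it with $\theta$ neither changes the variable tuple nor interferes with the equivalence of the two consistency statements. Beyond that, the proof is just an application of the deduction theorem.
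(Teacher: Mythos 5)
Your proposal is correct and is essentially the paper's own argument: both conjoin the sentence $\psi$ to the putative isolating formula and use that $T + \psi \vDash \forall \vec x(\theta \to \varphi)$ gives $T \vDash \forall \vec x((\psi \wedge \theta) \to \varphi)$, together with the equivalence (since $\psi$ is a sentence) between consistency of $(T+\psi)+\exists \vec x\,\theta$ and of $T+\exists \vec x\,(\psi \wedge \theta)$. The only difference is presentational — you phrase it as showing $\psi \wedge \theta$ isolates $p$ in $T$, while the paper derives the inconsistency directly from non-isolation — which is the same reasoning.
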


\begin{proof}
    Let $T$, $p(\vec x)$ and $\psi$ be as in the statement. Suppose, aiming at a contradiction, there is a formula $\varphi(\vec x)$, which isolates $p(\vec x)$ in $T + \psi$, i.e.  $(T + \psi) + \exists \vec x \: \varphi(\vec x)$ is consistent and $T + \psi \vDash \forall \vec x (\varphi(\vec x) \to \eta(\vec x))$ for all $\eta(\vec x) \in p(\vec x)$. Then, for all $\eta(\vec x) \in p(\vec x)$, we have  $$T \vDash \psi \to \forall \vec x (\varphi(\vec x) \to \eta(\vec x)),$$
    $$T \vDash \forall \vec x (\psi \to (\varphi(\vec x) \to \eta(\vec x))),$$
    $$T \vDash \forall \vec x (\psi \wedge \varphi(\vec x) \to \eta(\vec x)).$$

    Since $p(\vec x)$ is non-isolated in $T$, it follows that $T + \exists \vec x (\psi \wedge \varphi(\vec x))$ is inconsistent. Then $(T + \psi) + \exists \vec x \: \varphi(\vec x)$ is inconsistent, a contradiction. 
\end{proof}

\begin{proof}[Proof of \autoref{theorem_completeness}.] Suppose that there is a sentence $\psi$, which holds in the standard model and $\mathsf{T_{BA}}_p \nvDash \psi$. Then $\mathsf{T_{BA}}_p + \neg \psi$ is consistent. Consider the type $p(x) = \{x \ne \underline{n} | n \in \mathbb N\}$. 

We argue that $p(x)$ is non-isolated in $\mathsf{T_{BA}}_p$. Suppose there is a formula $\varphi(x)$ such that $\mathsf{T_{BA}}_p + \exists x \: \varphi(x)$ is consistent and 
\begin{align*}
    \mathsf{T_{BA}}_p \vDash \forall x (\varphi(x) \to x \ne \underline{n}) \text{ for all }n \in N. && (*)
\end{align*} Consider an arbitrary model $\mathcal M \vDash \mathsf{T_{BA}}_p + \exists x \: \varphi(x)$. By $(Bound_\varphi)$ we have $\mathcal M \vDash \exists x \leqslant \underline{n_\varphi} \: \varphi(x)$. Since $\mathsf{T_{BA}}_p \vdash x \leqslant \underline{n_\varphi} \leftrightarrow \bigvee\limits_{k = 0}^{{n_\varphi}} (x = \underline{k})$, there is some natural $k \leqslant \underline{n_\varphi}$ such that $\mathcal M \vDash \varphi(\underline{k})$. But then, substituting $k$ for $n$ and $\underline{k}$ for $x$ in $(*)$, we obtain $\mathcal M \vDash \underline{k} \ne \underline{k}$, a contradiction.

So, $\mathsf{T_{BA}}_p + \neg \psi$ is consistent, $p(x)$ is non-isolated in $\mathsf{T_{BA}}_p$, hence, by Lemma \autoref{lemma_about_types}, $p(x)$ is non-isolated in $\mathsf{T_{BA}}_p + \neg \psi$. By the Omitting Type Theorem, there is a model $\mathcal M \vDash \mathsf{T_{BA}}_p + \neg \psi$ which omits the type $p(x)$. Since $\mathcal M$ omits $p(x)$, every $m \in M$ is of the form $\underline n$, $n \in \mathbb N$. Using the axioms $(S0)-(S2), (A0)-(A1), (V0)-(V3)$ it is easy to see that $+$ and $V_p$ are defined in a standard way on $M$, that is, $\mathcal M \cong (\mathbb N, S, +, 0, V_p)$. But then $\mathcal M \vDash \psi$, a contradiction.
\end{proof}

\section{A syntactic proof of completeness}

We need the following (again rather trivial) lemmata.

\begin{lemma}\label{lemma_about_terms}
    \begin{enumerate}
        \item [$(1)$] $\mathsf{T_{BA}}_p \vdash \underline{n} + \underline{m} = \underline{n + m}$ for all $n, m \in \mathbb N$;
        \item [$(2)$] $\mathsf{T_{BA}}_p \vdash V_p(\underline{n}) = \underline{V_p(n)}$ for all $n \in \mathbb N$;
        \item [$(3)$] $\mathsf{T_{BA}}_p \vdash \underline{n} \ne \underline{m}$ for all $n, m \in \mathbb N$, $n \ne m$;
        \item [$(4)$] if $t$ is a closed term and $n$ its value in the standard model, then $\mathsf{T_{BA}}_p \vdash t = \underline{n}$;
        \item [$(5)$] $\mathsf{T_{BA}}_p \vdash x \leqslant \underline{n} \leftrightarrow \bigvee\limits_{k = 0}^{{n}} (x = \underline{k})$ for all $n\in \mathbb N$.
    \end{enumerate}
\end{lemma}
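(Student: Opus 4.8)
The plan is to prove the five statements essentially independently, each by an induction carried out in the metatheory, using only the explicitly listed axioms $(S0)$–$(V3)$; parts $(4)$ and $(5)$ will then reuse parts $(1)$ and $(2)$. I would present them roughly in the order $(1),(3),(2),(4),(5)$.

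For $(1)$, fix $n$ and induct on $m$: the base case $m=0$ is $(A0)$ together with $\underline{n+0}=\underline n$, and the step is $\underline n + \underline{m+1} = \underline n + S\underline m = S(\underline n + \underline m) = S\underline{n+m} = \underline{n+m+1}$ by $(A1)$ and the induction hypothesis. For $(3)$, assume $n<m$ (the relation is symmetric) and suppose $\mathsf{T_{BA}}_p$ proved $\underline n = \underline m$, i.e.\ $S^n(0) = S^n(S^{m-n}(0))$; applying $(S0)$ $n$ times yields $0 = S^{m-n}(0)$, and since $m-n\geqslant 1$ the right side is a term of the form $S(\cdot)$, contradicting $(S1)$.

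Part $(2)$ is the one real obstacle. I would argue by strong induction on $n$. For $n=0$ use $(V0)$. For $n\geqslant 1$ write $n = pq+r$ with $0\leqslant r\leqslant p-1$; by repeated appeal to part $(1)$ one gets $\mathsf{T_{BA}}_p\vdash p\,\underline q = \underline{pq}$ (with $p\,t$ the iterated-sum term from the conventions section, and the degenerate case $q=0$ also covered by $(1)$ since $\underline 0 = 0$) and hence $\mathsf{T_{BA}}_p\vdash \underline n = p\,\underline q + \underline r$. If $r\geqslant 1$, instantiating $(V3)$ at $x:=\underline q$, $i:=r$ gives $V_p(\underline n) = \underline 1 = \underline{V_p(n)}$, since $p\nmid n$ (the subcase $n=1$ is also directly $(V1)$). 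If $r=0$ then $n=pq$ with $q\geqslant 1$, so $q<n$ and the induction hypothesis applies; instantiating $(V2)$ at $x:=\underline q$ yields $V_p(\underline n) = V_p(p\,\underline q) = p\,V_p(\underline q) = p\,\underline{V_p(q)} = \underline{p\,V_p(q)} = \underline{V_p(n)}$, using the hypothesis, part $(1)$, and the arithmetic identity $V_p(pq) = p\,V_p(q)$ valid for $q\geqslant 1$. Part $(4)$ then follows by a routine induction on the structure of the closed term $t$: the cases $t=0$ and $t=St'$ are immediate, $t=t_1+t_2$ uses $(1)$, and $t=V_p(t')$ uses $(2)$.

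For $(5)$ I read $x\leqslant\underline n$ as the abbreviation $\exists z\,(x+z=\underline n)$ and induct on $n$. In the base case $n=0$: if $x+z=0$ then $(S2)$ forces $z=0$ (otherwise $z=Sy$ and $(A1)$, $(S1)$ give a contradiction), whence $x=0$ by $(A0)$; conversely $z:=0$ witnesses $0\leqslant\underline 0$. For the step, from $x+z=S\underline n$ apply $(S2)$ to $z$: if $z=0$ then $x=\underline{n+1}$; if $z=Sy$ then $(A1)$ and $(S0)$ give $x+y=\underline n$, so the induction hypothesis yields $\bigvee_{k=0}^{n}(x=\underline k)$; conversely, given $x=\underline k$ with $k\leqslant n+1$, part $(1)$ supplies the witness $z:=\underline{n+1-k}$. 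All of this is mechanical; the only bookkeeping of any weight is the base-$p$ decomposition and the degenerate cases $q=0$, $n=1$ in part $(2)$.
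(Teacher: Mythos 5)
Your proof is correct and follows essentially the same route as the paper: the same inductions for $(1)$--$(4)$, including the same base-$p$ case split via $(V2)$/$(V3)$ in part $(2)$, while for $(5)$ you write out the standard induction that the paper simply delegates to H\'ajek--Pudl\'ak. The only nitpick is in $(3)$: phrase the reductio as a derivation inside $\mathsf{T_{BA}}_p$ (i.e.\ $\mathsf{T_{BA}}_p \vdash \underline{n} = \underline{m} \to 0 = \underline{m-n}$, then apply $(S1)$), as the paper does, rather than as a meta-level supposition that the theory proves $\underline{n} = \underline{m}$, since the latter reading would only give non-provability instead of $\mathsf{T_{BA}}_p \vdash \underline{n} \ne \underline{m}$.
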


\begin{proof}
    \begin{enumerate}
        \item [$(1)$] The proof can be found in \cite[Theorem 1.6]{hajek_pudlak_2017}, however, it is quite simple. We argue by induction on $m$. If $m = 0$, it is simply the axiom $(A0)$. For the induction step we have 
        \begin{align*}
            \mathsf{T_{BA}}_p &\vdash \underline{n} + \underline{m + 1} = \underline{n} + S(\underline{m});\\
            &\vdash \underline{n} + \underline{m + 1} = S(\underline{n} + \underline{m}); &(A1)\\
            &\vdash \underline{n} + \underline{m + 1} = S(\underline{n + m}); &(\text{induction hypothesis})\\
            &\vdash \underline{n} + \underline{m + 1} = \underline{n + m + 1}.
        \end{align*}
        
        \item [$(2)$] We argue by induction on $n$. If $n = 0$ (or $n = 1$), it is simply the axiom $(V0)$ (resp., $(V1)$). Suppose $n > 1$ and $\mathsf{T_{BA}}_p \vDash V_p(\underline{n'}) = \underline{V_p(n')}$ for $n' < n$. If $p | n$, then $n = p n'$, $n' < n$ and $V_p(n) = p V_p(n')$. Then we have
        \begin{align*}
            \mathsf{T_{BA}}_p &\vdash V_p(\underline{n}) = V_p(\underline{p n'});\\
            &\vdash V_p(\underline{n}) = V_p(p\underline{n'}); &((1) \text{ applied } p \text{ times})\\
            &\vdash V_p(\underline{n}) = p V_p(\underline{n'}); &(V2)\\
            &\vdash V_p(\underline{n}) = p \underline{V_p (n')}; &(\text{induction hypothesis})\\
            &\vdash V_p(\underline{n}) = \underline{p V_p (n')}; &((1) \text{ applied } p \text{ times})\\
            &\vdash V_p(\underline{n}) = \underline{V_p (n)}.
        \end{align*}

        If $p \nmid n$, then $n = p n' + i$, where $0 < i < p$. Then we have 
        \begin{align*}
            \mathsf{T_{BA}}_p &\vdash V_p(\underline{n}) = V_p(\underline{p n' + i});\\
            &\vdash V_p(\underline{n}) = V_p(p\underline{n'} + \underline{i}); &((1) \text{ applied } p + 1 \text{ times})\\
            &\vdash V_p(\underline{n}) = \underline{1}; &(V3).
        \end{align*}
        
        \item [$(3)$] Let $n, m \in \mathbb N$, $n \ne m$. W.l.o.g. $n < m$. Then, applying $(S0)$ $n$ times, we have $\mathsf{T_{BA}}_p \vdash \underline{n} = \underline{m} \to 0 = \underline{m - n}$. Applying $(S1)$, we obtain $\mathsf{T_{BA}}_p \vdash \underline{n} \ne \underline{m}$.
        
        \item [$(4)$] Trivial induction on $t$.

        \item [$(5)$] See \cite[Theorem 1.6]{hajek_pudlak_2017}.
    \end{enumerate}
\end{proof}

\begin{lemma}\label{lemma_about_quantifier_free_sentences}
    Let $\theta$ be a quantifier free sentence. If $\theta$ holds in the standard model, then $\mathsf{T_{BA}}_p \vdash \theta$, otherwise $\mathsf{T_{BA}}_p \vdash \neg \theta$.
\end{lemma}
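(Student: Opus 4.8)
The plan is to prove Lemma~\ref{lemma_about_quantifier_free_sentences} by structural induction on the quantifier-free sentence $\theta$, reducing everything to the machinery already established in Lemma~\ref{lemma_about_terms}. First I would handle the atomic case: an atomic sentence built from closed terms has one of the forms $t_1 = t_2$, $S(t_1) = t_2$, $t_1 + t_2 = t_3$, or $V_p(t_1) = t_2$ (or, after unfolding, simply $t = s$ for closed terms $t, s$). By part~(4) of Lemma~\ref{lemma_about_terms}, $\mathsf{T_{BA}}_p \vdash t = \underline{n}$ and $\mathsf{T_{BA}}_p \vdash s = \underline{m}$, where $n$ and $m$ are the values of $t$ and $s$ in the standard model. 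If $\theta$ holds in $\mathbb{N}$, then $n = m$, so $\mathsf{T_{BA}}_p \vdash t = s$, i.e.\ $\mathsf{T_{BA}}_p \vdash \theta$. If $\theta$ fails in $\mathbb{N}$, then $n \ne m$, and part~(3) of Lemma~\ref{lemma_about_terms} gives $\mathsf{T_{BA}}_p \vdash \underline{n} \ne \underline{m}$, hence $\mathsf{T_{BA}}_p \vdash \neg \theta$.

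For the inductive step I would treat the Boolean connectives. For $\theta = \neg \theta_1$: if $\theta$ holds in $\mathbb{N}$ then $\theta_1$ fails, so by induction $\mathsf{T_{BA}}_p \vdash \neg \theta_1$, i.e.\ $\mathsf{T_{BA}}_p \vdash \theta$; symmetrically if $\theta$ fails. For $\theta = \theta_1 \wedge \theta_2$ (and likewise $\vee$, $\to$, which can be reduced to $\wedge$ and $\neg$ or handled directly): if $\theta$ holds in $\mathbb{N}$, both conjuncts hold, so by the induction hypothesis $\mathsf{T_{BA}}_p$ proves each of $\theta_1, \theta_2$, hence their conjunction; if $\theta$ fails, at least one conjunct fails, so $\mathsf{T_{BA}}_p$ proves its negation, hence proves $\neg(\theta_1 \wedge \theta_2)$ by propositional logic. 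Each case is a one-line propositional inference on top of the induction hypothesis.

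There is essentially no hard part here: the content is entirely front-loaded into Lemma~\ref{lemma_about_terms}, and what remains is a routine induction. The only point that needs a little care is making sure the atomic case is stated at the right level of generality — closed terms may themselves contain nested applications of $S$, $+$, and $V_p$, so it is cleanest to invoke part~(4) of Lemma~\ref{lemma_about_terms} to collapse every closed term to a numeral before comparing. One should also note that the ``standard model'' is decidable on quantifier-free sentences, so the dichotomy in the statement is exhaustive, and the two halves of the induction proceed in lockstep.
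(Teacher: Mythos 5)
Your proof is correct and follows essentially the same route as the paper: induction on $\theta$, with the atomic case settled by parts (3) and (4) of Lemma~\ref{lemma_about_terms} and the Boolean cases handled by routine propositional reasoning. The only difference is that you spell out the connective cases which the paper dismisses as trivial.
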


\begin{proof}
    We argue by induction on $\theta$.

    Let $\theta$ be an atomic formula of the form $t = s$, where $t$ and $s$ are closed terms. Let $n, m \in \mathbb N$ be the values of $t, s$ in the standard model respectively. By Lemma \autoref{lemma_about_terms} (4), $\mathsf{T_{BA}}_p \vdash t = \underline{n}$ and $\mathsf{T_{BA}}_p \vdash s = \underline{m}$. If $\mathbb N \vDash t = s$, then $n = m$ and $\mathsf{T_{BA}}_p \vdash t = s$. If $\mathbb N \vdash t \ne s$, then $n \ne m$ and by Lemma \autoref{lemma_about_terms} (3), $\mathsf{T_{BA}}_p \vdash \underline{n} \ne \underline{m}$, hence, $\mathsf{T_{BA}}_p \vdash t \ne s$.  

    The cases of boolean connectives treat trivially.
\end{proof}

\begin{lemma}\label{lemma_about_quantifier_elimination_in_sentences}
    Let $\varphi$ be a sentence, then there is a quantifier free sentence $\theta$ such that $\mathsf{T_{BA}}_p \vdash \varphi \leftrightarrow \theta$.
\end{lemma}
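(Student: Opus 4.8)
The plan is to argue by induction on the structure of the sentence $\varphi$, reducing it step by step to a quantifier-free sentence provably equivalent in $\mathsf{T_{BA}}_p$. The atomic and boolean cases are immediate (an atomic sentence is already quantifier-free; for $\neg$, $\wedge$, $\vee$, $\to$ apply the induction hypothesis to the immediate subsentences and combine), so the entire content of the lemma is the existential quantifier case: given $\varphi = \exists x\, \psi(x)$ where, by the induction hypothesis applied to $\psi$ viewed as having the single free variable $x$, we may assume things have been set up so that we must eliminate one leading existential quantifier. The key tool is the bound axiom: $\mathsf{T_{BA}}_p \vdash \exists x\, \psi(x) \to \exists x \leqslant \underline{n_\psi}\, \psi(x)$, and conversely $\exists x \leqslant \underline{n_\psi}\, \psi(x) \to \exists x\, \psi(x)$ is logically trivial, so $\mathsf{T_{BA}}_p \vdash \exists x\, \psi(x) \leftrightarrow \exists x \leqslant \underline{n_\psi}\, \psi(x)$.

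Next I would use Lemma 5.1(5), which gives $\mathsf{T_{BA}}_p \vdash x \leqslant \underline{n_\psi} \leftrightarrow \bigvee_{k=0}^{n_\psi} (x = \underline{k})$, to rewrite the bounded quantifier as a finite disjunction:
$$\mathsf{T_{BA}}_p \vdash \exists x\, \psi(x) \leftrightarrow \bigvee_{k=0}^{n_\psi} \psi(\underline{k}).$$
Each disjunct $\psi(\underline{k})$ is now a \emph{sentence} — still possibly containing quantifiers inside $\psi$, but with strictly fewer leading quantifiers eliminated or, more carefully, one can phrase the induction so that $\psi(x)$ has lower quantifier depth than $\varphi$. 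Applying the induction hypothesis to each $\psi(\underline{k})$ yields a quantifier-free sentence $\theta_k$ with $\mathsf{T_{BA}}_p \vdash \psi(\underline{k}) \leftrightarrow \theta_k$, and then $\theta := \bigvee_{k=0}^{n_\psi} \theta_k$ is quantifier-free with $\mathsf{T_{BA}}_p \vdash \varphi \leftrightarrow \theta$. The universal case is handled dually, either by the same bounded-disjunction trick with a conjunction $\bigwedge_{k=0}^{n_{\neg\psi}} \psi(\underline{k})$, or simply by pushing $\forall x\, \psi \leftrightarrow \neg\exists x\, \neg\psi$ through the already-treated cases.

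The one genuinely delicate point — and the main obstacle — is making the induction measure precise so that everything terminates: we are applying $(Bound_\psi)$ not to $\psi(x)$ itself but iterating down through subformulas, and substituting a numeral $\underline{k}$ for $x$ changes the \emph{syntactic form} of the formula (replacing a free-variable occurrence by a closed term $S^k(0)$) without introducing new quantifiers. So the right induction parameter is the number of quantifiers in $\varphi$ (or quantifier rank): substituting $\underline{k}$ for $x$ in $\psi$ does not increase the quantifier count, and $\psi$ has one fewer leading quantifier's worth, so each $\psi(\underline k)$ has strictly smaller quantifier count than $\exists x\,\psi$. One must also be slightly careful that $(Bound_\psi)$ is stated for formulas with \emph{exactly one} free variable; here $\psi(x)$ does have exactly the one free variable $x$ because $\varphi = \exists x\,\psi(x)$ is a sentence, so this matches. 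With the induction organized on quantifier count, the finite-disjunction replacement strictly decreases it, and the base case (quantifier count zero, i.e. $\varphi$ quantifier-free) is handled by Lemma 5.2 or is trivially already in the desired form. Everything else is a routine combination of Lemma 5.1 and propositional reasoning inside $\mathsf{T_{BA}}_p$.
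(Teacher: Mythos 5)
Your proof is correct and takes essentially the same route as the paper's: push $\forall$ through as $\neg\exists\neg$, dispose of atomic and boolean cases trivially, and reduce $\exists x\,\psi(x)$ to the finite disjunction $\bigvee_{k=0}^{n_\psi}\psi(\underline{k})$ via $(Bound_\psi)$ together with Lemma \autoref{lemma_about_terms}(5), then apply the induction hypothesis to each disjunct $\psi(\underline{k})$. Your explicit attention to the induction measure (quantifier count, unchanged by numeral substitution --- strictly one should pair it with formula length so the boolean cases also decrease) only spells out what the paper's ``induction on $\varphi$'' leaves implicit.
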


\begin{proof}
    We argue by induction on $\varphi$. Replacing all occurrences of $\forall x$ with $\neg \exists x \neg$, we may assume that there is no occurrences of $\forall$ in $\varphi$.
    
    If $\varphi$ is atomic, we are done. The $\varphi$ cases of boolean connectives are trivial. 

    Suppose $\varphi = \exists x \psi(x)$. Then, by $(Bound_\psi)$ and Lemma \autoref{lemma_about_terms} (5), we obtain $\mathsf{T_{BA}}_p \vdash \varphi \leftrightarrow \bigvee\limits_{k = 0}^{n_\psi} \psi(\underline{k})$. By induction hypothesis, there are quantifier free sentences $\theta_0, \dots, \theta_{n_\psi}$ such that $\mathsf{T_{BA}}_p \vdash \psi(\underline{k}) \leftrightarrow \theta_k$ for all $k \leqslant n_\psi$. Then $\mathsf{T_{BA}}_p \vdash \varphi \leftrightarrow \bigvee\limits_{k = 0}^{n_\psi} \theta_k$.
\end{proof}

\begin{proof}[Proof of \autoref{theorem_completeness}.] 
    Suppose $\mathbb N \vDash \varphi$, where $\varphi$ is a sentence. Combining Lemma \autoref{lemma_about_quantifier_elimination_in_sentences} and Lemma \autoref{lemma_about_quantifier_free_sentences}, we obtain that $\mathsf{T_{BA}}_p \vdash \varphi$.
\end{proof}

\bibliographystyle{unsrt}
\bibliography{references}

\end{document}